\numberwithin{equation}{section}
\newtheorem{theorem}{Theorem}[section]
\newtheorem{lemma}{Lemma}[section]
\theoremstyle{remark}
\newtheorem{remark}{Remark}[section]
\DeclareMathOperator{\supp}{supp}
\DeclareMathOperator{\Leb}{Leb}
\begin{document}
	
\title{Convergence of point processes associated with coupon collector's and Dixie cup problems}
	
\author{Andrii Ilienko\\
	{\small\it Igor Sikorsky Kyiv Polytechnic Institute}\\
	{\small\it ilienko@matan.kpi.ua}}
	
\date{\today}
	
\maketitle
	
\begin{abstract}
 We prove that, in the coupon collector's problem, the point processes given by the times of $r^{th}$ arrivals for coupons of each type, centered and normalized in a proper way, converge toward a non-homogeneous Poisson point process. This result is then used to derive some generalizations and infinite-dimensional extensions of classical limit theorems on the topic.
\end{abstract}
	
\section{Introduction}

The coupon collector's problem (CCP), as well as its generalization known as the Dixie cup problem (DCP), belong to the classics of combinatorial probability. Their statements are as follows: a person collects coupons, each of which belongs to one of $n$ different types. The coupons arrive one by one at discrete times, the type of each coupon being equiprobable and independent of types of preceding ones. Let $T_c^{(n)}$ stand for the (random) number of coupons a person needs to collect in order to assemble $c\in\mathbb N$ complete collections. The most typical questions concern asymptotics of $\mathbb ET_c^{(n)}$ and distributional limit theorems for $T_c^{(n)}$ themselves as $n\to\infty$. Sometimes, the case $c=1$ refers to CCP while $c\ge2$ to DCP. It should be noted that the terminology is not well established in the literature: sometimes both problems are attributed as CCP or, on the contrary, as DCP. The above terminology follows \cite{Hol86} and \cite{FL}.

CCP, DCP and their further generalizations have a long history, going back to de Moivre, Euler and Laplace. Since the 60s of the past century, there has appeared an extensive literature on the topic. In particular, we recall here a classical result by Erd\H{o}s and R\'enyi \cite{ER}:
\begin{gather}
\label{ER_E}
\mathbb ET_c^{(n)}=n\ln n+(c-1)n\ln\ln n+
(\gamma-\ln (c-1)!)n+{\scriptstyle\mathcal{O}}(n),\\
\lim_{n\to\infty}\mathbb P\Bigl\{\frac{T_c^{(n)}}n-\ln n-(c-1)\ln\ln n<x\Bigr\}=\exp\Bigl\{-\frac{\mathrm e^{-x}}{(c-1)!}\Bigr\},
\label{ER}
\end{gather}
with $\gamma=-\Gamma'(1)$ standing for the Euler–Mascheroni constant.

Subsequently, the theory was developed and generalized in different directions: non-equal probabilities of coupon types (plenty of literature --- \cite{Neal}, \cite{DP13}, \cite{ABS}, to cite just a few), various random sceneries (\cite{Hol01}, \cite{DP14}, \cite{FL}), collecting pairs (\cite{Mlad}, \cite{GM}), and so on. A nice and quite elementary introduction to the topic is given in \cite{FS}.

In his seminal paper, Holst \cite{Hol86} proposed a fruitful poissonization idea which allowed to prove limit results like \eqref{ER_E}, \eqref{ER} avoiding intricate combinatorial calculations. In a very recent paper by Glava\v{s} and Mladenovi\'c \cite{GM}, the connections between CCP and Poisson processes were shown to be even more tight. As a matter of fact, it was proved that the point processes given by the times of first arrivals for coupons of each type, centered and normalized in a proper way, converge toward a non-homogeneous Poisson point process as $n\to\infty$. The above convergence is, as usual, understood as the distributional one in the space of all locally finite point measures, endowed with the vague topology. The proof is based on rather delicate combinatorial arguments. As for the DCP, the authors do not consider the corresponding results, confining themselves to just pointing out that, within the framework of their methods, the relevant formulations and proofs would require much more technical details.

Inspired by this paper, the present note pursues a threefold objective. Firstly, we generalize the above result to the case of DCP. Secondly, to this end, we develop a specific approach involving a poissonization technique in the spirit of \cite{Hol86} and some coupling-based depoissonization procedure. This allows for avoiding sophisticated combinatorial machinery used in \cite{GM}. Thirdly, we demonstrate the power of this result from the applications point of view. It can be used to easily derive some generalizations and infinite-dimensional extensions of classical limit theorems on the topic.

\section{Preliminaries and notation}

Let $Y_{i,r}^{(n)}$, $i\in\mathbb N_n=\{1,\ldots,n\}$, $r,n\in\mathbb N$, stand for the time the $r^{\rm th}$ coupon of type $i$ arrives. So, $Y_{i,r}^{(n)}\sim\mathtt{NegBin}\bigl(r,\frac 1n\bigr)$, where by $\mathtt{NegBin}$ we mean that version of the negative binomial distribution which counts trials up to (and including) the $r^{th}$ success:
\[\mathbb P\bigl\{Y_{i,r}^{(n)}=k\bigr\}=
\binom{k-1}{r-1}\Bigl(\frac 1n\Bigr)^r\Bigl(1-\frac 1n\Bigr)^{k-r},\quad k\ge r.\]
For fixed $n$ and $r$, the random variables $Y_{i,r}^{(n)}$, $i\in\mathbb N_n$, are identically distributed but not independent, since $Y_{i,r}^{(n)}\ne Y_{i',r}^{(n)}$ for $i\ne i'$.

In order to cope with this dependency, following \cite{Hol86}, we consider a poissonized scheme. That is, we assume that coupons arrive at random times with independent $\mathtt{Exp}(1)$-distributed intervals $E_j$, $j\in\mathbb N$. More formally, we
introduce the unit-rate independently marked Poisson point process
$\Xi^{(n)}=\sum_{k=1}^\infty\delta_{(X_k,M_k)}$ with the uniform on $\mathbb N_n$ mark distribution: $\mathbb P\{M_k=i\}=\frac 1n$, $i\in\mathbb N_n$. Here, $X_k=\sum_{j=1}^kE_j$ stand for the arrival times, $M_k$ for the types of arriving coupons, and $\delta_u$ for the Dirac measure $\mathds1\{u\in\cdot\}$. Hence,
by Theorem 5.8 in \cite{LP}, \[\Xi_i^{(n)}=\sum_{\substack{(X_k,M_k)\in\\\supp\Xi^{(n)}}}\delta_{X_k}\mathds 1\{M_k=i\}=\Xi^{(n)}(\cdot\times\{i\}),\quad i\in\mathbb N_n,\]
are independent $\frac 1n$-rate Poisson point processes. The process $\Xi_i^{(n)}$ describes arrivals of coupons of $i^{th}$ type. In this setting, the random variables $Y_{i,r}^{(n)}$ introduced at the beginning of the section admit the following representation:
\begin{equation}
\label{Y}
Y_{i,r}^{(n)}=\min\Bigl\{m\ge r\colon\sum_{k=1}^m\mathds 1\{M_k=i\}=r\Bigr\}.
\end{equation}

Let $Z_{i,r}^{(n)}$, $i\in\mathbb N_n$, $r,n\in\mathbb N$, stand for the time the $r^{\rm th}$ coupon of type $i$ arrives in the above poissonized scheme. So, $Z_{i,r}^{(n)}$ are independent gamma-distributed random variables, $Z_{i,r}^{(n)}\sim\mathtt{\Gamma}\bigl(r,\frac 1n\bigr)$. For any fixed $n$, the sequences $\bigl(Y_{i,r}^{(n)}\bigr)$ and $\bigl(Z_{i,r}^{(n)}\bigr)$ are now given on a common probability space and coupled by
\begin{equation}
\label{ZY}
Z_{i,r}^{(n)}=\sum_{j=1}^{Y_{i,r}^{(n)}}E_j,\quad i\in\mathbb N_n,\,r\in\mathbb N.
\end{equation}
Moreover, $\bigl(Y_{i,r}^{(n)},i\in\mathbb N_n, r\in\mathbb N\bigr)$ is independent of
$(E_j, j\in\mathbb N)$, since, by \eqref{Y}, the former sequence is determined solely by marks $M_k$.

Let us denote
\begin{equation}
\label{psi}
\psi_r^{(n)}(x)=\frac xn-\ln n-(r-1)\ln\ln n,\quad x\in\mathbb R.
\end{equation}
The main object of our study is the (centered and normalized by means of $\psi_r^{(n)}$) point process of $r^{\rm th}$ arrivals of different types:
\begin{equation}
\label{xin_def}
\xi_r^{(n)}=\sum_{i=1}^n\delta_{\psi_r^{(n)}\bigl(Y_{i,r}^{(n)}\bigr)}.
\end{equation}
In what follows, we will also need the counterpart of $\xi_r^{(n)}$ in the poissonized setting:
\[\eta_r^{(n)}=\sum_{i=1}^n\delta_{\psi_r^{(n)}\bigl(Z_{i,r}^{(n)}\bigr)}.\]

\section{Main result}

Before proceeding to the main result, we recall some basic definitions related to convergence of point processes (see \cite{Res87}, \cite{Res07}, or \cite{Kal} for details). Let $M_p(\mathbb R)$ denote the space of all locally finite point measures on $\mathbb R$. For $\mu,\mu_1,\mu_2,\ldots\in M_p(\mathbb R)$, $\mu_n$ are said to converge vaguely to $\mu$ (denoted as $\mu_n\xrightarrow{v}\mu$) if $\int_{\mathbb R}f\,\mathrm d\mu_n\to\int_{\mathbb R}f\,\mathrm d\mu$ for each continuous compactly supported test function $f\colon\mathbb R\to[0,+\infty)$.
The set $M_p(\mathbb R)$, endowed with the corresponding topology, can be metrized as a complete separable metric space. This setting allows to consider the distributional convergence of point processes $\xi,\xi_1,\xi_2\ldots$, denoted as $\xi_n\xrightarrow{vd}\xi$. The main result of this note, Theorem \ref{main_th} below, asserts that the point processes $\xi_r^{(n)}$ converge in this sense toward a non-homogeneous Poisson process.

\begin{theorem}
	\label{main_th}
	Let $\xi_r$	be the Poisson point process on $\mathbb R$ with intensity measure $\lambda_r({\rm d}x)=\frac 1{(r-1)!}\mathrm{e}^{-x}\,{\rm d}x$. Then $\xi_r^{(n)}\xrightarrow{vd}\xi_r$ as $n\to\infty$.
\end{theorem}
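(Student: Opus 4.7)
The plan is to follow a two-step poissonization--depoissonization scheme, as announced in the introduction: first I prove the Poisson convergence of the easier poissonized process, $\eta_r^{(n)}\xrightarrow{vd}\xi_r$, and then use the coupling \eqref{ZY} to transfer it to $\xi_r^{(n)}$. The point of the first step is that $Z_{1,r}^{(n)},\ldots,Z_{n,r}^{(n)}$ are independent and identically distributed, so $\eta_r^{(n)}$ is a superposition of $n$ i.i.d.\ single-atom measures. To such a null array one can apply the classical criterion for convergence to a Poisson point process (see e.g.\ Proposition 3.21 of \cite{Res87}): it suffices that (a) the intensity measures converge vaguely to $\lambda_r$, and (b) the individual-atom probabilities uniformly tend to zero on bounded sets.

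Condition (b) is automatic since each atom has probability $O(1/n)$ of lying in a given bounded set. For (a), I would compute explicitly the density of $\psi_r^{(n)}(Z_{1,r}^{(n)})$ from the $\mathtt{\Gamma}(r,1/n)$-density of $Z_{1,r}^{(n)}$ by the change of variables $x=\psi_r^{(n)}(z)$. A short calculation gives, for $n$ large and any fixed $x\in\mathbb R$,
\[
n\cdot\text{density of }\psi_r^{(n)}\bigl(Z_{1,r}^{(n)}\bigr)\text{ at }x=\frac{1}{(r-1)!}\Bigl(\frac{x+\ln n+(r-1)\ln\ln n}{\ln n}\Bigr)^{r-1}\mathrm e^{-x}.
\]
The ratio in parentheses converges to $1$ uniformly on compact $x$-sets, so the right-hand side tends uniformly to $\frac{1}{(r-1)!}\mathrm e^{-x}$, which delivers (a) and hence $\eta_r^{(n)}\xrightarrow{vd}\xi_r$.

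For the depoissonization, by \eqref{ZY} and \eqref{psi} we have
\[
\psi_r^{(n)}\bigl(Z_{i,r}^{(n)}\bigr)-\psi_r^{(n)}\bigl(Y_{i,r}^{(n)}\bigr)=\frac{1}{n}\sum_{j=1}^{Y_{i,r}^{(n)}}(E_j-1).
\]
The sequence $(E_j-1)_j$ is i.i.d., centred, and independent of $\bigl(Y_{i,r}^{(n)}\bigr)_i$, so Doob's maximal inequality applied to the random walk $S_k=\sum_{j\le k}(E_j-1)$ yields $\sup_{k\le C n\ln n}|S_k|=O_{\mathbb P}\bigl(\sqrt{n\ln n}\bigr)$ for any fixed $C$. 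Any atom of either $\xi_r^{(n)}$ or $\eta_r^{(n)}$ inside a fixed bounded window corresponds to $Y_{i,r}^{(n)}$ (resp.\ $Z_{i,r}^{(n)}$) of order $n\ln n$, so with probability tending to $1$ all such indices satisfy $Y_{i,r}^{(n)}\le Cn\ln n$. On this event, the $i$th atoms of $\xi_r^{(n)}$ and $\eta_r^{(n)}$ differ by $O\bigl(\sqrt{\ln n/n}\bigr)=o(1)$ uniformly in $i$, whence the vague distance between the two point measures tends to $0$ in probability, and $\xi_r^{(n)}$ inherits the limit $\xi_r$ by a Slutsky-type argument in $M_p(\mathbb R)$. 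The main technical nuisance I anticipate is precisely this matching step: care is needed to treat atoms near the boundary of the test interval, which I would handle by choosing the interval with $\xi_r$-null boundary and enlarging it by an arbitrary $\varepsilon>0$ before letting $\varepsilon\downarrow 0$.
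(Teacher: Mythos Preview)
Your proposal follows the paper's two-step strategy exactly: the poissonization step (density computation for $\psi_r^{(n)}(Z_{1,r}^{(n)})$ followed by Proposition~3.21 in \cite{Res87}) is identical to the paper's Lemma~\ref{lem_pois}, and the depoissonization via the coupling identity $\psi_r^{(n)}(Z_{i,r}^{(n)})-\psi_r^{(n)}(Y_{i,r}^{(n)})=\tfrac1n\sum_{j\le Y_{i,r}^{(n)}}(E_j-1)$, finished off with an $\varepsilon$-boundary argument, matches the paper's Lemmas~3.2--3.3. The one technical variation is in how uniform smallness of the displacements is obtained: the paper bounds each $\mathbb P\bigl\{\bigl|S_{Y_{i,r}^{(n)}}\bigr|>\varepsilon n\bigr\}$ separately by a fourth-moment Markov inequality (getting $O(\varepsilon^{-4}n^{-2})$ per index) and then takes a union over $i\in\mathbb N_n$, whereas you control all displacements at once via Doob's maximal inequality for $(S_k)$ combined with the bound $\max_i Y_{i,r}^{(n)}=O_{\mathbb P}(n\ln n)$. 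Your route is slightly cleaner in that it avoids the fourth-moment calculation and the union bound, but it requires the a~priori tail estimate on $\max_i Y_{i,r}^{(n)}$ (which is elementary but should be stated, since deducing it from \eqref{ER} would be circular here). Both routes yield the same conclusion.
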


\begin{remark}
	\label{nhg_PPP}
	The limiting process $\xi_r$ allows for a simple interpretation. Let $\xi$ be a stationary unit-rate Poisson point process restricted to $(0,+\infty)$, and put
	\begin{equation}
	\label{h}
	h(x)=-\ln(r-1)!-\ln x,\quad x>0.
	\end{equation}
	Then, $\xi_r\overset d=\xi\circ h^{-1}$. In other words, $\xi_r\overset d=\sum_{x\in\supp\xi}\delta_{h(x)}$.
	
	Indeed, by the mapping theorem (see, e.g., Theorem 5.1 in \cite{LP}), $\xi\circ h^{-1}$ is a Poisson process with intensity measure of the form $\Leb\circ h^{-1}$, where $\Leb$ stands for the Lebesgue measure. Since, for any $[a,b]\subset\mathbb R$,
	\[\bigl(\Leb\circ h^{-1}\bigr)[a,b]=\Leb\biggl[\frac{\mathrm e^{-b}}{(r-1)!},\frac{\mathrm e^{-a}}{(r-1)!}\biggr]=\int_a^b\frac 1{(r-1)!}\mathrm{e}^{-x}\,{\rm d}x=\lambda_r[a,b],\]
	the result follows.
\end{remark}

We divide the proof of Theorem \ref{main_th} into several steps. First, we prove a similar result in the poissonized setting.

\begin{lemma}
	\label{lem_pois}
	We have $\eta_r^{(n)}\xrightarrow{vd}\xi_r$ as $n\to\infty$.
\end{lemma}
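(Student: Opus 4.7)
The core observation driving the plan is that, in the poissonized scheme, the random variables $Z_{i,r}^{(n)}$, $i\in\mathbb N_n$, are i.i.d.\ $\Gamma(r,1/n)$, so $\eta_r^{(n)}$ is a superposition of $n$ i.i.d.\ point masses. For such superpositions, vague distributional convergence to a Poisson point process with intensity measure $\lambda_r$ is equivalent to vague convergence $n\mathbb P\bigl\{\psi_r^{(n)}\bigl(Z_{1,r}^{(n)}\bigr)\in\,\cdot\,\bigr\}\xrightarrow{v}\lambda_r$ of the scaled common laws (see, e.g., Proposition 3.21 in \cite{Res87}, or Theorem 4.7 in \cite{Kal}). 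Since $\lambda_r$ is non-atomic and intervals form a convergence-determining class, the whole question reduces to checking that $n\mathbb P\bigl\{\psi_r^{(n)}\bigl(Z_{1,r}^{(n)}\bigr)\in[a,b]\bigr\}\to\lambda_r([a,b])$ for every finite $a<b$.

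To carry out this single scalar asymptotic, I would use the density of the $\Gamma(r,1/n)$ distribution to write, with $u_n:=\ln n+(r-1)\ln\ln n$,
\[
\mathbb P\bigl\{\psi_r^{(n)}\bigl(Z_{1,r}^{(n)}\bigr)\in[a,b]\bigr\}
=\int_{n(a+u_n)}^{n(b+u_n)}\frac{z^{r-1}\mathrm e^{-z/n}}{n^r(r-1)!}\,\mathrm dz,
\]
and then perform two successive substitutions $z=nw$ and $w=u_n+y$. They collapse the right-hand side to
\[
\frac{\mathrm e^{-u_n}}{(r-1)!}\int_a^b(u_n+y)^{r-1}\mathrm e^{-y}\,\mathrm dy
=\frac{1}{n(\ln n)^{r-1}(r-1)!}\int_a^b(u_n+y)^{r-1}\mathrm e^{-y}\,\mathrm dy,
\]
using $\mathrm e^{-u_n}=1/\bigl(n(\ln n)^{r-1}\bigr)$. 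Multiplying by $n$ reduces everything to the asymptotic behaviour of $(u_n+y)^{r-1}/(\ln n)^{r-1}$, which tends to $1$ uniformly on $[a,b]$ because $(r-1)\ln\ln n/\ln n\to 0$. A dominated convergence argument then delivers
\[
n\mathbb P\bigl\{\psi_r^{(n)}\bigl(Z_{1,r}^{(n)}\bigr)\in[a,b]\bigr\}\longrightarrow\int_a^b\frac{\mathrm e^{-y}}{(r-1)!}\,\mathrm dy=\lambda_r\bigl([a,b]\bigr).
\]

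Taken together, these two ingredients close the proof. I expect no genuine obstacle: the independence supplied by poissonization strips away all the dependence that made the non-poissonized problem combinatorially awkward, and the very definition \eqref{psi} of $\psi_r^{(n)}$ is reverse-engineered so that after shift and rescaling the $\Gamma(r,1/n)$ density matches $\mathrm e^{-y}/(r-1)!$ to leading order. The only step requiring any care is the uniform control of the polynomial prefactor $(u_n+y)^{r-1}/(\ln n)^{r-1}$, which is however a routine binomial expansion based on $\ln\ln n=o(\ln n)$.
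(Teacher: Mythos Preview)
Your proposal is correct and follows essentially the same route as the paper's proof: both exploit the independence of the $Z_{i,r}^{(n)}$ to reduce the claim, via Proposition~3.21 in \cite{Res87}, to the vague convergence $n\mathbb P\bigl\{\psi_r^{(n)}(Z_{1,r}^{(n)})\in\cdot\bigr\}\xrightarrow{v}\lambda_r$, and both verify this by the same asymptotic analysis of the $\Gamma(r,1/n)$ density after the affine shift $\psi_r^{(n)}$. The only cosmetic difference is that the paper first shows pointwise convergence of the transformed density $n\tilde f_{i,r}^{(n)}(x)\to\mathrm e^{-x}/(r-1)!$ and then integrates, whereas you integrate first and take the limit afterward; the underlying computation and the uniform control of $(u_n+y)^{r-1}/(\ln n)^{r-1}$ are identical.
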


\begin{proof}
	Since $Z_{i,r}^{(n)}\sim\mathtt{\Gamma}\bigl(r,\frac 1n\bigr)$, the density $f_{i,r}^{(n)}$ of $Z_{i,r}^{(n)}$ is
	\[f_{i,r}^{(n)}(x)=\frac 1{n^r(r-1)!}x^{r-1}\mathrm e^{-\frac xn}\mathds1\{x\ge0\}.\]
	So, the density $\tilde f_{i,r}^{(n)}$ of $\psi_r^{(n)}\bigl(Z_{i,r}^{(n)}\bigr)$ takes the form
	\begin{multline*}
	\tilde f_{i,r}^{(n)}(x)=nf_{i,r}^{(n)}(nx+n\ln n+(r-1)n\ln\ln n)\\=
	\frac 1{n(r-1)!}\mathrm e^{-x}\Bigl(1+(r-1)\frac{\ln\ln n}{\ln n}+\frac x{\ln n}\Bigr)^{r-1}\mathds1\{x\ge-\ln n-(r-1)\ln\ln n\}.
	\end{multline*}
	Hence, for any $x\in\mathbb R$,
	\[\lim_{n\to\infty}n\tilde f_{i,r}^{(n)}(x)=\frac 1{(r-1)!}\mathrm e^{-x},\]
	and, moreover, this convergence is uniform over bounded sets. Then, for each such Borel set $B$, as $n\to\infty$,
	\[n\int_B\tilde f_{i,r}^{(n)}(x)\,\mathrm dx\to\int_B\frac 1{(r-1)!}\mathrm e^{-x}\,\mathrm dx,\]
	and so, by Proposition 3.12 in \cite{Res87},
	\[n\mathbb P\{\psi_r^{(n)}\bigl(Z_{i,r}^{(n)}\bigr)\in\cdot\}\xrightarrow{v}
	\lambda_r(\cdot)\quad\text{on $\mathbb R$}.\]
	Taking into account the independence of $Z_{i,r}^{(n)}$ for different $i$, the
	well-known fact on the convergence of binomial point processes toward a Poisson one (see, e.g., the warm-up in the proof of Proposition 3.21 in \cite{Res87}) delivers the claim.
\end{proof}

In the next stage, we will need some depoissonization procedure in order to turn $\eta_r^{(n)}\xrightarrow{vd}\xi_r$ into $\xi_r^{(n)}\xrightarrow{vd}\xi_r$. Such depoissonization techniques usually involve bounds on distances between random elements in poissonized and depoissonized settings (see, e.g., Lemma 1.4 in \cite{Pen} for random variables and, as its application, Theorem 3.2 in \cite{MM} for point processes). Since $\xi_r^{(n)}$ and $\eta_r^{(n)}$ are related by \eqref{ZY}, we may use this idea and rate how close both processes are.

\begin{lemma}
	For any segment $[a,b]\subset\mathbb R$ and any $n\in\mathbb N$, $\varepsilon>0$,
	\begin{multline}
	\label{nebound}
	\mathbb P\bigl\{\xi_r^{(n)}[a,b]\ne\eta_r^{(n)}[a,b]\bigr\}\\\le
	c_r\varepsilon^{-4}n^{-1}+
	\mathbb P\bigl\{\eta_r^{(n)}[a-\varepsilon,a+\varepsilon]\ge1\bigr\}+
	\mathbb P\bigl\{\eta_r^{(n)}[b-\varepsilon,b+\varepsilon]\ge1\bigr\}
	\end{multline}
	with some $c_r>0$.
\end{lemma}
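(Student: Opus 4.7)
My plan is to exploit the coupling \eqref{ZY} so that, with high probability, the atoms of $\xi_r^{(n)}$ and $\eta_r^{(n)}$ lie close in pairs; when that is the case, any discrepancy between the two counts on $[a,b]$ must be caused by an atom of $\eta_r^{(n)}$ landing near one of the endpoints $a,b$. Since $\psi_r^{(n)}$ is affine with slope $1/n$, the coupling \eqref{ZY} will give
\[
\psi_r^{(n)}\bigl(Z_{i,r}^{(n)}\bigr)-\psi_r^{(n)}\bigl(Y_{i,r}^{(n)}\bigr)=\frac{1}{n}\bigl(Z_{i,r}^{(n)}-Y_{i,r}^{(n)}\bigr)=\frac{1}{n}S_{Y_{i,r}^{(n)}},\qquad S_m:=\sum_{j=1}^m(E_j-1),
\]
a centred randomly-indexed random walk.

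Next I would introduce the ``good'' event
\[
A_\varepsilon=\Bigl\{\max_{i\in\mathbb N_n}\bigl|\psi_r^{(n)}\bigl(Z_{i,r}^{(n)}\bigr)-\psi_r^{(n)}\bigl(Y_{i,r}^{(n)}\bigr)\bigr|\le\varepsilon\Bigr\}
\]
and argue as follows: on $A_\varepsilon$, if $\xi_r^{(n)}[a,b]\ne\eta_r^{(n)}[a,b]$ then some index $i$ must have its two paired atoms on opposite sides of either $a$ or $b$; since these are at distance $\le\varepsilon$ from each other, the atom $\psi_r^{(n)}(Z_{i,r}^{(n)})$ of $\eta_r^{(n)}$ necessarily lies in $[a-\varepsilon,a+\varepsilon]\cup[b-\varepsilon,b+\varepsilon]$. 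This will yield the set inclusion
\[
\{\xi_r^{(n)}[a,b]\ne\eta_r^{(n)}[a,b]\}\cap A_\varepsilon\subseteq\{\eta_r^{(n)}[a-\varepsilon,a+\varepsilon]\ge1\}\cup\{\eta_r^{(n)}[b-\varepsilon,b+\varepsilon]\ge1\},
\]
accounting for the last two terms on the right-hand side of \eqref{nebound}.

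The remaining task, and the main technical step, is to show $\mathbb P(A_\varepsilon^c)\le c_r\varepsilon^{-4}n^{-1}$. For this I will use the independence of $(E_j)$ and $(Y_{i,r}^{(n)})$ and condition on $Y_{1,r}^{(n)}$: the standard fourth-moment bound $\mathbb E[S_m^4]\le Cm^2$, valid since $E_1-1$ is centred with $\mathbb E[(E_1-1)^4]=9$, combined with the negative binomial moment formula $\mathbb E\bigl[\bigl(Y_{1,r}^{(n)}\bigr)^2\bigr]=rn(n-1)+r^2n^2$, will yield $\mathbb E\bigl[S_{Y_{1,r}^{(n)}}^4\bigr]\le c_r'n^2$. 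A union bound over $i$ followed by Markov's inequality then produces
\[
\mathbb P(A_\varepsilon^c)\le n\,\mathbb P\bigl\{\bigl|S_{Y_{1,r}^{(n)}}\bigr|>\varepsilon n\bigr\}\le n\cdot\frac{c_r'n^2}{\varepsilon^4 n^4}=\frac{c_r}{\varepsilon^4 n}.
\]
The main obstacle, then, is controlling the fourth moment of the randomly-indexed walk $S_{Y_{1,r}^{(n)}}$; fortunately the explicit independence between $(E_j)$ and the marks, recorded in the paper just before \eqref{psi}, reduces this to a single conditioning, so the difficulty is largely book-keeping.
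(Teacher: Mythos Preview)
Your proposal is correct and follows essentially the same route as the paper's own proof: both exploit the affine nature of $\psi_r^{(n)}$ to rewrite the discrepancy as $\frac1nS_{Y_{i,r}^{(n)}}$, bound $\mathbb P(A_\varepsilon^c)$ via a fourth-moment Markov inequality together with a union bound over $i$, and then argue that on the good event any miscount forces an $\eta_r^{(n)}$-atom into an $\varepsilon$-neighbourhood of $a$ or $b$. The only cosmetic differences are that the paper computes $\mathbb E S_{Y}^4$ exactly as $m_4\,\mathbb E Y + 3m_2^2\,\mathbb E Y(Y-1)$ before bounding, whereas you use the cruder (but sufficient) $\mathbb E S_m^4\le Cm^2$ followed by $\mathbb E\bigl[(Y_{1,r}^{(n)})^2\bigr]=O(n^2)$; and you spell out the set-inclusion step more carefully than the paper, which leaves it to the ``reasoning at the beginning of the proof''.
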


\begin{proof} The idea of \eqref{nebound} is pretty simple. Roughly speaking, there may be two reasons for $\xi_r^{(n)}[a,b]\ne\eta_r^{(n)}[a,b]$: either some point of $\xi_r^{(n)}$ deviated far away from the corresponding point of $\eta_r^{(n)}$, or there are points of $\eta_r^{(n)}$ close enough to the boundary of $[a,b]$. The first term on the right-hand side of \eqref{nebound} is responsible for the first reason while the rest for the second one. We proceed to the implementation.
	
	Fix an $\varepsilon>0$. Then, by \eqref{psi} and \eqref{ZY},
	\begin{equation*}
	\mathbb P\bigl\{\bigl|\psi_r^{(n)}\bigl(Z_{i,r}^{(n)}\bigr)-
	\psi_r^{(n)}\bigl(Y_{i,r}^{(n)}\bigr)\bigr|>\varepsilon\bigr\}=
	\mathbb P\bigl\{\bigl|Z_{i,r}^{(n)}-
	Y_{i,r}^{(n)}\bigr|>\varepsilon n\bigr\}=
	\mathbb P\Biggl\{\Biggl|\sum_{j=1}^{Y_{i,r}^{(n)}}(E_j-1)\Biggr|>\varepsilon n\Biggr\}
	\end{equation*}
	for each $i\in\mathbb N_n$. So, by Markov inequality, the latter does not exceed 
	\begin{equation*}
	\Delta_{\varepsilon,n}=(\varepsilon n)^{-4}\,\mathbb E\Biggl(\sum_{j=1}^{Y_{i,r}^{(n)}}(E_j-1)\Biggr)^4.
	\end{equation*}
	Since $E_j-1$ are centered i.i.d., $\Delta_{\varepsilon,n}$ can be easily calculated by a standard conditioning argument: we have
	\begin{equation*}
	\Delta_{\varepsilon,n}=(\varepsilon n)^{-4}\bigl(m_4\,\mathbb EY_{1,r}^{(n)}+
	m_2^2\,\mathbb EY_{1,r}^{(n)}(Y_{1,r}^{(n)}-1)\bigr),
	\end{equation*}
	with $m_k$ standing for $\mathbb E(E_1-1)^k$. As $Y_{1,r}^{(n)}\sim\mathtt{NegBin}\bigl(r,\frac 1n\bigr)$, straightforward calculations lead to the bound
	\begin{equation*}
	\Delta_{\varepsilon,n}\le c_r\varepsilon^{-4}n^{-2},\quad n\in\mathbb N,
	\end{equation*}
	with some $c_r>0$. Now we can finally bound the probability that some point of $\xi_r^{(n)}$ is far away from the corresponding point of $\eta_r^{(n)}$: by subadditivity,
	\begin{equation*}
	\mathbb P\bigl\{\bigl|\psi_r^{(n)}\bigl(Z_{i,r}^{(n)}\bigr)-
	\psi_r^{(n)}\bigl(Y_{i,r}^{(n)}\bigr)\bigr|>\varepsilon\text{ for some $i\in\mathbb N_n$}\bigr\}\le n\Delta_{\varepsilon,n}\le c_r\varepsilon^{-4}n^{-1}.
	\end{equation*}
	So, \eqref{nebound} follows from the reasoning at the beginning of the proof.	
\end{proof}

In order to deal with the last two terms on the right-hand side of \eqref{nebound}, we will need an easy technical lemma on the distributional convergence of point processes.

\begin{lemma}
	\label{lem_conv}
	Let $X^{(n)}$, $n\in\mathbb N$, and $X$ be point processes on $\mathbb R$ such that $X^{(n)}\xrightarrow{vd}X$, and $X$ has a diffuse intensity measure. If $(I_n,n\in\mathbb N)$ is a decreasing sequence of intervals such that $I_n\downarrow I$ for some interval $I$, then
	\[\mathbb P\{X^{(n)}(I_n)\ge1\}\to\mathbb P\{X(I)\ge1\}.\]
\end{lemma}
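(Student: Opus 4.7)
The plan is to combine a portmanteau-style implication for vague convergence with a sandwich argument. The key observation is that every interval $J\subset\mathbb R$ is a continuity set of $X$: since the intensity measure of $X$ is diffuse and $\partial J$ has at most two points, $\mathbb E X(\partial J)=0$, so $X(\partial J)=0$ a.s. For any bounded interval $J$, the standard fact that $X^{(n)}\xrightarrow{vd}X$ implies $X^{(n)}(J)\xrightarrow{d}X(J)$ then yields
\[\mathbb P\{X^{(n)}(J)\ge1\}\to\mathbb P\{X(J)\ge1\},\]
since the integer-valued random variable $X(J)$ makes $\{k\ge1\}$ a continuity set of its law.

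For the lower bound, I would use $I\subseteq I_n$ and apply the display above with $J=I$ (focusing on the case of interest, $I$ bounded):
\[\liminf_{n\to\infty}\mathbb P\{X^{(n)}(I_n)\ge1\}\ge\lim_{n\to\infty}\mathbb P\{X^{(n)}(I)\ge1\}=\mathbb P\{X(I)\ge1\}.\]

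For the upper bound, I would pick a strictly decreasing sequence of bounded intervals $(J_m)$ with $J_m\supsetneq I$, $J_m\downarrow I$, chosen with the same endpoint types (open/closed) as $I$. Because the endpoints of the nested intervals $I_n$ converge monotonically to those of $I$, for each fixed $m$ one has $I_n\subseteq J_m$ for all sufficiently large $n$, and hence
\[\limsup_{n\to\infty}\mathbb P\{X^{(n)}(I_n)\ge1\}\le\lim_{n\to\infty}\mathbb P\{X^{(n)}(J_m)\ge1\}=\mathbb P\{X(J_m)\ge1\}.\]
Finally, letting $m\to\infty$, the decreasing events $\{X(J_m)\ge1\}$ intersect almost surely to $\{X(I)\ge1\}$: indeed, $\supp X\cap J_1$ is a.s. finite, so by pigeonhole any point lying in every $J_m$ must already lie in $\bigcap_mJ_m=I$. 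Continuity of probability from above then closes the sandwich and delivers the claim.

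The main delicacy I expect is purely technical: arranging the auxiliary intervals $J_m$ so that simultaneously $I_n\subseteq J_m$ holds eventually in $n$ (so $J_m$ must strictly contain $I$ on the sides along which $I_n$ contracts) and $\bigcap_m J_m=I$ (so $J_m$ must inherit the openness/closedness of $I$ at each endpoint). Both conditions are easy to meet by explicit construction from the endpoints of $I$; no substantive probabilistic difficulty remains once the continuity observation $X(\partial J)=0$ a.s. is in place.
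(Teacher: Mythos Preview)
Your argument is correct and complete for the case that matters (all $I_n$, and hence $I$, bounded---which the paper's own proof also tacitly needs). The only quibble is the claim that one can always choose $J_m\supsetneq I$ with $\bigcap_m J_m=I$: if $I$ has an open endpoint and the $I_n$ genuinely overshoot there, the intersection $\bigcap_m J_m$ will be the closure of $I$ at that endpoint rather than $I$ itself. This is harmless, since diffuseness of the intensity measure gives $X(\overline I\setminus I)=0$ a.s., so $\{X(\bigcap_m J_m)\ge1\}=\{X(I)\ge1\}$ a.s.\ anyway; but the fix is via diffuseness, not via the endpoint bookkeeping you describe.

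Your route differs from the paper's. The paper invokes the Skorokhod coupling to upgrade $X^{(n)}\xrightarrow{vd}X$ to a.s.\ vague convergence on a common probability space, then bounds
\[
\bigl|X^{(n)}(I_n)-X(I)\bigr|\le\bigl|X^{(n)}(I)-X(I)\bigr|+X^{(n)}(I_N\setminus I),
\]
showing each term vanishes a.s.\ (the first by $X(\partial I)=0$, the second by letting $N\to\infty$ after $n\to\infty$). This yields the stronger conclusion $X^{(n)}(I_n)\xrightarrow{d}X(I)$, of which the lemma is an immediate corollary. Your sandwich argument is more elementary---no coupling, just the standard portmanteau-type fact $X^{(n)}(J)\xrightarrow{d}X(J)$ for bounded continuity sets $J$---but it targets only the probability $\mathbb P\{\cdot\ge1\}$ rather than the full distribution. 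For the purpose at hand both work equally well; the paper's approach is a touch slicker and delivers a bit more.
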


\begin{proof}
	The proof is based on an application of the Skorokhod coupling (see, e.g., \cite{Res07}, p.~41). With the latter in mind, we may assume that $\bigl(X^{(n)},n\in\mathbb N\bigr)$ and $X$ are given on a common probability space, and $X^{(n)}\xrightarrow{v}X$ a.s. So,
	\begin{equation}
	\label{X_ineq}
	\bigl|X^{(n)}(I_n)-X(I)\bigr|\le
	\bigl|X^{(n)}(I)-X(I)\bigr|+\bigl|X^{(n)}(I_n)-X^{(n)}(I)\bigr|.
	\end{equation}
	Since the intensity measure of $X$ is assumed diffuse, $X(\partial I)=0$ a.s, and the first term on the right-hand side of \eqref{X_ineq} a.s.~vanishes as $n\to\infty$ due to the vague convergence. To deal with the second term, let us fix $N\in\mathbb N$. For all $n\ge N$,
	\[\bigl|X^{(n)}(I_n)-X^{(n)}(I)\bigr|=X^{(n)}(I_n\setminus I)\le X^{(n)}(I_N\setminus I),\]
	and the right-hand side converges a.s.~toward $X(I_N\setminus I)$. In other words,
	\[\limsup_{n\to\infty}\,\bigl|X^{(n)}(I_n)-X^{(n)}(I)\bigr|\le X(I_N\setminus I)\quad\text{a.s.}\]
	Letting $N\to\infty$ proves that the second term on the right-hand side of \eqref{X_ineq} a.s.~vanishes as $n\to\infty$ too.
	Hence, $X^{(n)}(I_n)\rightarrow X(I)$ a.s., and so $X^{(n)}(I_n)\xrightarrow{d} X(I)$, which clearly implies the claim.
\end{proof}

We may now proceed to the final part of the proof.
\begin{proof}[Proof of Theorem \ref{main_th}]
	In \eqref{nebound}, let us take $\varepsilon=n^{-\frac 15}$. Then,
	\begin{multline*}
	\mathbb P\bigl\{\xi_r^{(n)}[a,b]\ne\eta_r^{(n)}[a,b]\bigr\}\\\le
	c_rn^{-\frac 15}+
	\mathbb P\bigl\{\eta_r^{(n)}\bigl[a-n^{-\frac 15},a+n^{-\frac 15}\bigr]\ge1\bigr\}+
	\mathbb P\bigl\{\eta_r^{(n)}\bigl[b-n^{-\frac 15},b+n^{-\frac 15}\bigr]\ge1\bigr\}.
	\end{multline*}
	Lemmas \ref{lem_pois} and \ref{lem_conv} imply that, as $n\to\infty$,
	\begin{equation*}
	\mathbb P\bigl\{\eta_r^{(n)}\bigl[a-n^{-\frac 15},a+n^{-\frac 15}\bigr]\ge1\bigr\}
	\to\mathbb P\bigl\{\xi_r\{a\}\ge1\bigr\}=0,
	\end{equation*}
	and the same holds for $b$.	
	Together with the foregoing inequality, we have
	\begin{equation}
	\label{SR}
	\mathbb P\bigl\{\xi_r^{(n)}[a,b]\ne\eta_r^{(n)}[a,b]\bigr\}\to0,\quad n\to\infty.
	\end{equation}
	
	Let $\mathcal U$ stand for the ring of finite unions of bounded closed segments in $\mathbb R$. For each $U=[a_1,b_1]\cup\ldots\cup[a_l,b_l]\in\mathcal U$ and $k\in\mathbb N$, \eqref{SR} implies
	\begin{multline*}
	\bigl|\mathbb P\bigl\{\xi_r^{(n)}(U)=k\bigl\}-
	\mathbb P\bigl\{\eta_r^{(n)}(U)=k\bigl\}\bigr|\le
	\mathbb P\bigl\{\xi_r^{(n)}(U)\ne\eta_r^{(n)}(U)\bigr\}\\\le
	\sum_{i=1}^l\mathbb P\bigl\{\xi_r^{(n)}[a_i,b_i]\ne\eta_r^{(n)}[a_i,b_i]\bigr\}	
	\to0,\quad n\to\infty.
	\end{multline*}
	Moreover, by Lemma \ref{lem_pois},
	\[\lim_{n\to\infty}\mathbb P\bigl\{\eta_r^{(n)}(U)=k\bigl\}=\mathbb P\bigl\{\xi_r(U)=k\bigl\}.\]
	So, the last two formulas imply
	\[\lim_{n\to\infty}\mathbb P\bigl\{\xi_r^{(n)}(U)=k\bigl\}=\mathbb P\bigl\{\xi_r(U)=k\bigl\}.\]
	Notice that the process $\xi_r$ is simple, since it is a Poisson process with diffuse intensity measure (see, e.g., Proposition 6.9 in \cite{LP}).
	Hence, $\xi_r^{(n)}\xrightarrow{vd}\xi_r$ on $\mathbb R$ by Theorem 4.15 in \cite{Kal}.	
\end{proof}

In the sequel, we will need the following remark.

\begin{remark}
	\label{semi-c}
	Theorem \ref{main_th} remains true if we consider $\xi_r^{(n)}$ and $\xi_r$ as point processes on the semi-compactified real axis $\mathbb R\cup\{+\infty\}$, endowed with some relevant metric, say,
	$d(x,y)=|\mathrm e^{-y}-\mathrm e^{-x}|$. The proof follows along the same lines.
\end{remark}

\section{Implications of the main result}

From Theorem \ref{main_th}, we may easily deduce a number of known limit results which were often originally proved by direct complicated calculations. Moreover, this approach allows to obtain some far-reaching generalizations and infinite-dimensional extensions of those results. Finally, an application of Theorem \ref{main_th} often makes it possible to clarify some related surprising phenomena. As an example, consider $T_{r,m}^{(n)}$, $m\le n-1$, the first time when some $n-m$ (unspecified) of the $n$ coupon types have already arrived at least $r$ times each, and put $T_{r,m}^{(n)}=0$ for $m\ge n$. Various limit theorems for the case $r=1$ were studied in \cite{BB}, \cite{SC}, \cite{Hol81}, see also \S1.2 in \cite{KSC}. In particular, Theorem 4 in \cite{BB} asserts that
\begin{equation}
\label{chi2}
\ln 2n-\frac {T_{1,m}^{(n)}}n\xrightarrow{d}\ln Q,\quad n\to\infty,
\end{equation}
where $Q$ follows $\chi_{2m+2}^2$, the $\chi^2$-distribution with $2m+2$ degrees of freedom. Theorem \ref{appl1_th} below gives both the generalization for any $r\in\mathbb N$ and the infinite-dimensional extension in the sense of distributional convergence in $\mathbb R^{\infty}$, and also clarifies why a $\chi^2$-distribution appears.

Consider the random elements $V_r^{(n)}$ and $V_r$ in $\mathbb R^\infty$ given by
\begin{gather*}
V_r^{(n)}=\bigl(\psi_r^{(n)}\bigl(T_{r,m}^{(n)}\bigr),\,m\in\mathbb N\cup\{0\}\bigr),\\
V_r=\bigl(-\ln(r-1)!-\ln\sum_{j=1}^{m+1}E_j,\,m\in\mathbb N\cup\{0\}\bigr),
\end{gather*}
where $\psi_r^{(n)}$ is defined by \eqref{psi}, and $E_j$, $j\in\mathbb N$, are i.i.d.~$\mathtt{Exp}(1)$. 

\begin{theorem}
	\label{appl1_th}
	We have $V_r^{(n)}\xrightarrow{d}V_r$ in $\mathbb R^\infty$ as $n\to\infty$.
\end{theorem}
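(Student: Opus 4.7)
The first step is an order-statistic identification. The event $\{T_{r,m}^{(n)}\le t\}$ is exactly the event that at least $n-m$ of the $Y_{i,r}^{(n)}$ are $\le t$, so $T_{r,m}^{(n)}$ equals the $(n-m)$-th smallest of $Y_{1,r}^{(n)},\ldots,Y_{n,r}^{(n)}$. Since $\psi_r^{(n)}$ is affine increasing, $\psi_r^{(n)}(T_{r,m}^{(n)})$ is precisely the $(m+1)$-th largest atom of $\xi_r^{(n)}$. Dually, Remark \ref{nhg_PPP} realizes $\xi_r$ as $\sum_k\delta_{h(S_k)}$ with $h(x)=-\ln(r-1)!-\ln x$ decreasing and $S_k=E_1+\cdots+E_k$ increasing, so the $(m+1)$-th largest atom of $\xi_r$ is $h(S_{m+1})$ --- exactly the $m$-th coordinate of $V_r$. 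Thus the theorem is equivalent to convergence in distribution, in $\mathbb R^\infty$, of the decreasingly-ordered atom sequence of $\xi_r^{(n)}$ toward that of $\xi_r$.

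The plan is to upgrade Theorem \ref{main_th} via a continuous-mapping argument. I work on the semi-compactified line $\mathbb R\cup\{+\infty\}$ of Remark \ref{semi-c}, on which every $[a,+\infty]$ is compact. The process $\xi_r$ is a.s.\ simple (its intensity $\lambda_r$ is diffuse) and satisfies $\xi_r((a,+\infty])\sim\mathtt{Poisson}(\mathrm e^{-a}/(r-1)!)$, so $\xi_r$ has a.s.\ only finitely many atoms above any finite level but a.s.\ infinitely many overall. By the Skorokhod coupling, already used in Lemma \ref{lem_conv}, one may assume $\xi_r^{(n)}\to\xi_r$ vaguely almost surely. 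Fix $m$; on the full-measure event that $\xi_r$ is simple I can choose (depending on $\omega$) some $a\notin\supp\xi_r$ with $\xi_r((a,+\infty])\ge m+1$. Vague convergence on the compact $[a,+\infty]$, together with $\xi_r(\{a\})=\xi_r(\{+\infty\})=0$ and the simplicity of $\xi_r$, forces $\xi_r^{(n)}([a,+\infty])=\xi_r([a,+\infty])$ for all $n$ large and lets one match the finitely many atoms in $[a,+\infty]$ so that their positions converge individually. Ordering them in decreasing order gives a.s.\ convergence of the first $m+1$ coordinates of $V_r^{(n)}$ to those of $V_r$; since $m$ is arbitrary, this yields a.s.\ coordinatewise convergence in $\mathbb R^\infty$, hence convergence in distribution.

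The only nontrivial point --- and the one I expect to require most care --- is the passage from vague convergence on $[a,+\infty]$ to convergence of individual atom positions. This is a standard consequence of vague convergence to a simple limit measure giving no mass to the boundary, proved by applying Portmanteau to small continuity intervals around each of the finitely many atoms of $\xi_r$ on $[a,+\infty]$ and exploiting that the $\xi_r^{(n)}([c,d])$ are integer-valued. Once this is in place, the remainder is a direct reduction to Theorem \ref{main_th} and routine bookkeeping.
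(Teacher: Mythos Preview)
Your proposal is correct and follows essentially the same route as the paper: both identify the coordinates of $V_r^{(n)}$ and $V_r$ with the decreasingly ordered atoms of $\xi_r^{(n)}$ and $\xi_r$, pass to the semi-compactified line via Remark~\ref{semi-c}, apply the Skorokhod coupling to get a.s.\ vague convergence, and then argue that the top $m+1$ atoms converge individually. The only difference is cosmetic: the paper invokes Proposition~3.13 of \cite{Res87} for the atom-matching step, whereas you sketch that argument by hand (Portmanteau on small continuity intervals around each atom, integer-valuedness), and you make the $\omega$-dependent choice of the threshold $a$ explicit where the paper leaves it implicit.
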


Before proving the theorem, we make a couple of important remarks.

\begin{remark}
	Restricting attention only to one-dimensional projections, we obtain
	\begin{equation}
	\label{T_rem}
	\frac {T_{r,m}^{(n)}}n-\ln n-(r-1)\ln\ln n\xrightarrow{d}-\ln(r-1)!-\ln S_{m+1},\quad n\to\infty,
	\end{equation}
	where $S_{m+1}=\sum_{j=1}^{m+1}E_j\sim\mathtt\Gamma(m+1,1)$. Confining ourselves only to the case $r=1$, we get
	\[\ln 2n-\frac{T_{1,m}^{(n)}}n\xrightarrow{d}\ln\bigl(2S_{m+1}\bigr),\quad n\to\infty.\]
	Finally, noting that $2S_{m+1}\sim\chi_{2m+2}^2$ leads to \eqref{chi2}.
\end{remark}

\begin{remark}
	Now allowing in \eqref{T_rem} for arbitrary $r\in\mathbb N$ but setting $m=0$, we can easily deduce \eqref{ER}, the limit theorem by Erd\H{o}s and R\'enyi.
\end{remark}

We now turn to the proof.

\begin{proof}[Proof of Theorem \ref{appl1_th}]
	We will need some additional notation. Fix $m\in\mathbb N\cup\{0\}$, and let
	$$L^m\colon M_p\bigl(\mathbb R\cup\{+\infty\}\bigr)\to\bigl(\mathbb R\cup\{-\infty,+\infty\}\bigr)^{m+1}$$ map $\sum\delta_{x_i}$ into the vector of its \lq\lq last-but-$j$\rq\rq th points (possibly infinite), $0\le j\le m$. In other words,
	$L^m(\mu)=\bigl(L_j(\mu),0\le j\le m\bigr)$, where
	\[L_j(\mu)=\inf\{x\in\mathbb R\colon\mu(x,+\infty]=j\}.\]
	Further, we denote by $\mu|_K$ the point measure $\mu$ restricted to the compact set $K\subset\mathbb R\cup\{+\infty\}$.
	
	By Theorem \ref{main_th} in the form of Remark \ref{semi-c}, using the Skorokhod coupling, we may consider $\bigl(\xi_r^{(n)},n\in\mathbb N\bigr)$ and $\xi_r$ on a common probability space and assume that $\xi_r^{(n)}\xrightarrow{v}\xi_r$ on $\mathbb R\cup\{+\infty\}$ a.s. Hence, by Proposition 3.13 in \cite{Res87}, $L^m\bigl(\xi_r^{(n)}|_{[a,+\infty]}\bigr)\to L^m(\xi_r|_{[a,+\infty]})$ a.s.~for any $a\in\mathbb R$. This implies $L^m\bigl(\xi_r^{(n)}\bigr)\to L^m(\xi_r)$ a.s., and so 
	\begin{equation}
	\label{fd_conv}
	L^m\bigl(\xi_r^{(n)}\bigr)\xrightarrow{d}L^m(\xi_r).
	\end{equation}
	
	Note that, by \eqref{xin_def},
	\[L^m\bigl(\xi_r^{(n)}\bigr)=\bigl(L_j\bigl(\xi_r^{(n)}\bigr),0\le j\le m\bigr)=\bigl(\psi_r^{(n)}\bigl(T_{r,j}^{(n)}\bigr),0\le j\le m\bigr),\]
	and is thus just the projection of $V_r^{(n)}$ onto the first $m+1$ coordinates (from $0$-th to $m$-th).
	On the other hand, by Remark \ref{nhg_PPP} and
	the i.i.d.~property of inter-arrival times for $\xi$, we come to
	\[L^m(\xi_r)=(L_j(\xi_r),0\le j\le m)=\Bigl(-\ln(r-1)!-\ln\sum_{k=1}^{j+1}E_j,
	0\le j\le m\Bigr),\]
	which is, similarly, the projection of $V_r$ onto the first $m+1$ coordinates.
	Hence, \eqref{fd_conv} proves the finite-dimensional convergence $V_r^{(n)}\xrightarrow{fd}V_r$ in $\mathbb R^{\infty}$ as $n\to\infty$. To complete the proof, it only remains to recall that in $\mathbb R^{\infty}$ the notions of finite-dimensional convergence and convergence in distribution are equivalent (see, e.g., \cite{Res07}, pp.~53--54).
\end{proof}

We now consider another application of Theorem \ref{main_th}, namely, to \lq\lq rare\rq\rq\ coupon types. Recall that an arriving coupon is assumed to belong to any of the $n$ types with the same probability $\frac 1n$. But, due to random factors, some coupon types will need a long time until they arrive for the $r$-th time. Taking \eqref{ER} into account, we will call a type $i\in\mathbb N_n$ $x$-rare, $x\in\mathbb R$, if
\begin{equation}
\label{rare}
Y_{i,r}^{(n)}\ge nx+n\ln n+(r-1)n\ln\ln n.
\end{equation}
Denote by $C_r^{(n)}(x)$ the number of $x$-rare types:
\[C_r^{(n)}(x)=\sum_{i=1}^n\mathds1\bigl\{Y_{i,r}^{(n)}\ge nx+n\ln n+(r-1)n\ln\ln n\bigr\}.\]
Below we state and prove a functional limit theorem for $C_r^{(n)}=\bigl(C_r^{(n)}(x),x\in\mathbb R\bigr)$ in the Skorokhod $J_1$-topology.

Let $N=\bigl(N(t),t\ge0\bigr)$ be a homogeneous unit-rate Poisson process, considered not as a random point measure, but classically, as a L\'evy process with Poisson increments. Actually, $N(t)=\xi(0,t]$, where $\xi$ is introduced in Remark \ref{nhg_PPP}. Also, let $N_r(x)=N\bigl(\frac{\mathrm e^{-x}}{(r-1)!}\bigr)$, $x\in\mathbb R$.

\begin{theorem}
	\label{rare_th}
	We have $C_r^{(n)}\xrightarrow{d}N_r$ in $D(\mathbb R)$, endowed with the $J_1$-topology, as $n\to\infty$.
\end{theorem}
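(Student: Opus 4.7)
The plan is to realize $C_r^{(n)}(x)$ and $N_r(x)$ as (left-continuous) tail counting functions of the point processes already controlled by Theorem \ref{main_th}, and then upgrade the vague convergence of those processes to the required $J_1$-convergence in $D(\mathbb R)$.

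First I would note the key identifications. Directly from the definitions of $C_r^{(n)}$, $\xi_r^{(n)}$, and $\psi_r^{(n)}$,
\[C_r^{(n)}(x)=\sum_{i=1}^n\mathds 1\bigl\{\psi_r^{(n)}\bigl(Y_{i,r}^{(n)}\bigr)\ge x\bigr\}=\xi_r^{(n)}[x,+\infty),\]
while from Remark \ref{nhg_PPP} (so that $\xi_r\overset d=\xi\circ h^{-1}$ with $h$ as in \eqref{h}),
\[\xi_r[x,+\infty)=\xi\bigl(0,\mathrm e^{-x}/(r-1)!\bigr]=N\bigl(\mathrm e^{-x}/(r-1)!\bigr)=N_r(x).\]
Thus the theorem reduces to $J_1$-convergence of the random tail-counting step functions $x\mapsto\xi_r^{(n)}[x,+\infty)$ toward $x\mapsto\xi_r[x,+\infty)$ in $D(\mathbb R)$.

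Next, by Remark \ref{semi-c} together with Skorokhod's coupling, we may realize the processes $\xi_r^{(n)}$ and $\xi_r$ on a single probability space so that $\xi_r^{(n)}\xrightarrow{v}\xi_r$ a.s.\ on $\mathbb R\cup\{+\infty\}$. Fix any $a<b$; since $\lambda_r$ is diffuse, a.s.\ $\xi_r\{a\}=\xi_r\{b\}=\xi_r\{+\infty\}=0$. Both point processes are simple: $\xi_r^{(n)}$ because the $Y_{i,r}^{(n)}$ are a.s.\ distinct integers, and $\xi_r$ because it is Poisson with diffuse intensity. Applying Proposition 3.13 in \cite{Res87} exactly as in the proof of Theorem \ref{appl1_th}, the ordered atoms of $\xi_r^{(n)}|_{[a,+\infty]}$ converge a.s.\ to those of $\xi_r|_{[a,+\infty]}$, and for all $n$ large enough the two processes have the same (a.s.\ finite) number of atoms in $[a,+\infty]$. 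Since every jump of both $C_r^{(n)}$ and $N_r$ has unit size, a piecewise-linear time change on $[a,b]$ that fixes the endpoints and sends the atoms of $\xi_r^{(n)}$ to those of $\xi_r$ in matching order then yields $C_r^{(n)}|_{[a,b]}\to N_r|_{[a,b]}$ in $D[a,b]$ with the $J_1$-topology, almost surely.

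Finally, to pass from compact intervals to $D(\mathbb R)$, I would invoke the standard characterization that $J_1$-convergence in $D(\mathbb R)$ is equivalent to $J_1$-convergence of restrictions to $[a,b]$ for all continuity points $a<b$ of the limit. As $N_r$ has a.s.\ only countably many jump points, the preceding paragraph applies for almost every such pair $(a,b)$; letting $a\to-\infty$ and $b\to+\infty$ through continuity points of $N_r$ gives $C_r^{(n)}\to N_r$ in $D(\mathbb R)$ a.s., whence the claimed distributional convergence. The main technical delicacy will be handling the $J_1$-time change near $-\infty$, where $N_r$ explodes, but confining the construction to compact intervals and then invoking the compact-exhaustion characterization of $J_1$-convergence on $D(\mathbb R)$ neatly avoids the issue.
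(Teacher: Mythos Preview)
Your argument is correct, and the identifications $C_r^{(n)}(x)=\xi_r^{(n)}[x,+\infty)$ and $N_r(x)\overset{d}=\xi_r[x,+\infty)$ are exactly those used in the paper. The route you take after that point, however, is genuinely different from the paper's.

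The paper first pushes $\xi_r^{(n)}$ forward by the inverse of $h$ to obtain vague convergence of point processes on $(0,+\infty)$ toward the homogeneous process $\xi$, and then invokes Kallenberg's equivalence (Theorem~4.20 in \cite{Kal}) between vague convergence of point measures on $(0,+\infty)$ and $J_1$-convergence of their cumulative processes; a final change of variable $t\mapsto h^\leftarrow(x)$ yields the result. Your argument instead stays on $\mathbb R\cup\{+\infty\}$, uses the Skorokhod coupling of Remark~\ref{semi-c} to get a.s.\ vague convergence, extracts a.s.\ convergence of the ordered atoms on each $[a,+\infty]$ via Proposition~3.13 of \cite{Res87}, and then builds the $J_1$-time change by hand on each compact interval before passing to $D(\mathbb R)$ by compact exhaustion through continuity points of the limit. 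In effect you are reproving, in this concrete setting, the content of Kallenberg's Theorem~4.20. The payoff of the paper's route is brevity and a clean reduction to a known equivalence; the payoff of yours is that it is more self-contained and avoids citing that equivalence, at the cost of having to manage the $J_1$ details (matching atom counts across $b$, unit jump sizes, countable exhaustion) explicitly. Both arguments rely on the same two ingredients you correctly single out: simplicity of all processes involved and finiteness of $\lambda_r[a,+\infty)$.
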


\begin{remark}
	Strictly speaking, the processes $C_r^{(n)}$ and $N_r$ are c\`agl\`ad, not c\`adl\`ag, and we should have considered $D_{\rm left}(\mathbb R)$, the space of left-continuous functions with finite right limits, instead of $D(\mathbb R)$. But as the $J_1$-topology may be introduced on $D_{\rm left}(\mathbb R)$ in the same way as on $D(\mathbb R)$, we will close our eyes to these differences (cf.~Remark 3.2 on p.~58 in \cite{Res07}).
\end{remark}

\begin{remark}
	For fixed $x$, the distributional convergence of $C_r^{(n)}(x)$ to $N_r(x)$, as well as its rate in terms of total variation distance, were known before as a result of Stein-Chen method for Poisson approximation (see, e.g., Chapter 6 in \cite{BHJ} and Example 4.34 in \cite{Ross}). For related finite-dimensional results see also Theorem 1 on p.~172 in \cite{KSC} and Theorem 2 in \cite{Bol}.
\end{remark}

\begin{proof} [Proof of Theorem \ref{rare_th}]
	Let $h^\leftarrow(x)=\frac{\mathrm e^{-x}}{(r-1)!}$, $x\in\mathbb R$, be the inverse function to $h$ given by \eqref{h}.
	Theorem \ref{main_th} and Remark \ref{nhg_PPP} imply that, by the continuous mapping theorem, $\xi_r^{(n)}\circ(h^\leftarrow)^{-1}\xrightarrow{vd}\xi$. (The intuitively obvious continuity of $T\colon M_p(\mathbb R)\to M_p\bigl((0,+\infty)\bigr)$ given by $T(\mu)=\mu\circ(h^\leftarrow)^{-1}$ follows from Proposition 3.18 in \cite{Res87}.) According to Theorem 4.20 in \cite{Kal}, the distributional convergence of random point measures on $(0,+\infty)$ in the vague topology is equivalent to that of the associated cumulative processes in the $J_1$-topology. So, $\xi_r^{(n)}\bigl((h^\leftarrow)^{-1}(0,\cdot]\bigr)\to N(\cdot)$ in the latter sense, and thus, by transfer, $\xi_r^{(n)}[\cdot,+\infty)\to N_r(\cdot)$. It remains to note that, by \eqref{xin_def}, \eqref{psi}, and \eqref{rare}, $\xi_r^{(n)}[x,+\infty)=C_r^{(n)}(x)$ for any $x\in\mathbb R$.
\end{proof}

\end{document}